\documentclass[11pt,letterpaper]{amsart}

\usepackage[letterpaper,margin=1in]{geometry}
\usepackage{amssymb}
\usepackage[numbers,sort&compress]{natbib}
\usepackage{booktabs}
\usepackage{longtable}
\usepackage{array}
\usepackage{hyperref}
\usepackage{xcolor}
\usepackage{listings}
\usepackage{caption}
\hypersetup{
  colorlinks=true,
  linkcolor=blue!60!black,
  citecolor=blue!60!black,
  urlcolor=blue!60!black,
  pdftitle={Split-Cubic Curves from D(n)-Triples: A Bounded Finite-Field Census},
  pdfauthor={Sompong Chuysurichay, Sawian Jaidee, Chatchawan Panraksa, and Teerapol Sukhonwimolmal},
  pdfkeywords={D(n)-triples, Diophantine m-tuples, elliptic curves, Legendre curves, quadratic twists, finite-field point counts, cofactor, embedding degree, finite-field arithmetic diagnostics}
}

\newcommand{\Z}{\mathbb{Z}}
\newcommand{\Q}{\mathbb{Q}}
\newcommand{\F}{\mathbb{F}}
\DeclareMathOperator{\ord}{ord}

\newcommand{\Eabcn}{E_{a,b,c}^{(n)}}

\newtheorem{theorem}{Theorem}[section]

\newtheorem{proposition}[theorem]{Proposition}

\newtheorem{corollary}[theorem]{Corollary}

\theoremstyle{definition}

\newtheorem{definition}[theorem]{Definition}

\newtheorem{remark}[theorem]{Remark}

\newtheorem{question}[theorem]{Question}

\title[Split-Cubic Curves from $D(n)$-Triples]{Split-Cubic Curves from \texorpdfstring{$D(n)$-Triples}{D(n)-Triples}:
       A Bounded Finite-Field Census}

\author[S. Chuysurichay]{Sompong Chuysurichay}
\address{Division of Computational Science, Faculty of Science,
Prince of Songkla University, Songkhla 90110, Thailand}
\email{sompong.c@psu.ac.th}
\author[S. Jaidee]{Sawian Jaidee}
\address{Department of Mathematics, Faculty of Science,
Khon Kaen University, Khon Kaen 40002, Thailand}
\email{jsawia@kku.ac.th}
\thanks{Sawian Jaidee is the corresponding author.}
\author[C. Panraksa]{Chatchawan Panraksa}
\address{Applied Mathematics Program, Mahidol University International
College, Nakhon Pathom 73170, Thailand}
\email{chatchawan.pan@mahidol.ac.th}
\author[T. Sukhonwimolmal]{Teerapol Sukhonwimolmal}
\address{Department of Mathematics, Faculty of Science,
Khon Kaen University, Khon Kaen 40002, Thailand}
\email{teerasu@kku.ac.th}
\date{October 2, 2026}
\subjclass[2020]{Primary 11G20, 14H52; Secondary 11D09, 11Y40, 94A60}
\keywords{Diophantine triples, $D(n)$-triples, elliptic curves,
Legendre curves, quadratic twists, rational $2$-torsion,
finite-field point counts, cofactor, embedding degree}
\begin{document}
\raggedbottom
\begin{abstract}
We record a finite-field census of the split-cubic elliptic curves
$E_{a,b,c}^{(n)}:y^2=(ax+n)(bx+n)(cx+n)$ attached to positive integral
$D(n)$-triples. For $n\in\{-3,3,5,8,12,20\}$ there are $164$ such
triples with $1\le a<b<c\le100$, allowing zero squares. Their
reductions at the ten primes $\operatorname{NextPrime}(10^k)$,
$3\le k\le12$, give $1640$ records, of which $45$ have cofactor four
relative to the largest prime divisor of the group order. We collect
the standard split-cubic and quadratic-twist identities in the
explicit normalization $\lambda=b(c-a)/(c(b-a))$ and
$\delta=cn(b-a)$, and use them to audit the point counts. The archive
records group orders, factorizations, cofactors, embedding degrees
and transfer-field bit lengths. A worked example compares the main
curve with its nonsquare twist. The census is deterministic and
descriptive; it provides neither an asymptotic density estimate nor
cryptographic parameter recommendations.
\end{abstract}
\maketitle

\section{Introduction}
\label{sec:intro}

For nonzero $n$, a $D(n)$-triple is a triple $\{a,b,c\}$ for which
$ab+n$, $ac+n$, and $bc+n$ are squares. We consider the associated
split cubic
\begin{equation}
  \label{eq:Eabcn}
  \Eabcn \;:\; y^2=(ax+n)(bx+n)(cx+n).
\end{equation}
The census in this note contains all positive integral triples with
$1\le a<b<c\le100$ for $n\in\{-3,3,5,8,12,20\}$, under the convention
that zero is a square. There are $164$ such triples. Their reductions
at $p_k=\operatorname{NextPrime}(10^k)$, $3\le k\le12$, give $1640$
triple--prime records, of which $45$ have cofactor $h=4$.
These totals count the specified records; curves are not identified
up to isomorphism.

Elliptic curves attached to Diophantine triples have a substantial
history. Dujella studies the $n=1$ split cubic and its integral
Weierstrass model~\citep{Dujella2001MTuplesEC}; Dujella and
Miki\'c study rational torsion in curves induced by
$D(4)$-triples~\citep{DujellaMikic2014D4}. The rational roots
$-n/a,-n/b,-n/c$ in \eqref{eq:Eabcn} imply full rational
$2$-torsion when $a,b,c$ are nonzero and pairwise distinct.
This property follows from the split form; the $D(n)$-condition
specifies the source of the examples.

The passage from a split cubic to a quadratic twist of a Legendre
curve, and the corresponding quadratic-character point-count
identity, are standard~\citep[Sections~1--2]{AuerTop2002Legendre}.
We record the explicit parameters for \eqref{eq:Eabcn} and the
resulting cofactor test to make the computation transparent.
Related split-cubic twist families and the exclusion of anomalous
reductions also occur in Qiu's work~\citep{Qiu2025Twists}.
For cryptographic examples from Diophantine triples,
Bindu, Somanath, and Das construct a tetradecagonal-number family
and give a finite-field encryption example~\citep{BinduSomanathDas2024}.
Their $D(50)$-triple $\{1,14,31\}$ leads to
$y^2=(x+14)(x+31)(x+434)$ at $p=479$. Our associated integral model
includes the factor $n$ in each constant term, so at $n=50$ it is
the $50$-twist of that cubic. Their single-prime encryption example
and the present multi-parameter census address different questions.

The contribution here is the bounded census and its reproducible
record of point counts, factorizations, Legendre parameters, twist
factors, and cofactor-$4$ cases. The structural identities recalled
below are standard inputs to that computation. The selected height
bound and prime ladder define a finite descriptive dataset; they
do not support an asymptotic density claim or a cryptographic
parameter recommendation. Questions about primes for which
$\#E(\F_p)/t$ is prime already belong to the refined Koblitz
framework~\citep{Zywina2011Koblitz}; the case relevant to $h=4$
has $t=4$.

\section{Background}
\label{sec:background}

\begin{definition}[Positive integral $D(n)$-triple]
  \label{def:dn-triple}
  Let $n\in\Z\setminus\{0\}$. A positive integral $D(n)$-triple is a set
  $\{a,b,c\}\subset \Z_{>0}$ of three pairwise distinct integers such
  that $ab+n$, $ac+n$ and $bc+n$ are all squares in $\Z$. In this
  paper the square $0$ is allowed.
\end{definition}

\begin{remark}[Square-zero convention]
  Some treatments of Diophantine $m$-tuples exclude or separate
  zero-square cases; we include them, while negative integers are not
  squares in $\Z$. This affects the bounded enumeration for $n=-3$
  but not nonsingularity once $nabc(a-b)(a-c)(b-c)\ne0$.
\end{remark}

Diophantine $m$-tuples and their extension problems are classical;
see \citep{Dujella-survey,DujellaFuchs,Dujella-open}. Here the
associated split cubic is used instead for finite-field diagnostics.

\begin{proposition}[Associated split-cubic curve]
  \label{prop:assoc-curve}
  Let $n\in \Z\setminus\{0\}$ and let
  $a,b,c\in\Z_{>0}$ be pairwise distinct. Then the smooth projective
  closure of the affine cubic \eqref{eq:Eabcn} defines an elliptic
  curve $\Eabcn/\Q$. Under the substitution $X = abc\,x$,
  $Y = abc\,y$ it has the integral Weierstrass model
  \begin{equation}
    \label{eq:integral-model}
    Y^{2} \;=\; X^{3} + n(ab+ac+bc)\,X^{2}
            + abc\,n^{2}(a+b+c)\,X + (abc)^{2}n^{3}
  \end{equation}
  The discriminant of the displayed Weierstrass model
  \eqref{eq:integral-model} is
  \begin{equation}
    \label{eq:discriminant}
    \Delta_{\mathrm{mod}}(\Eabcn)
    =16(abc)^{2}n^{6}(a-b)^{2}(a-c)^{2}(b-c)^{2}
  \end{equation}
  The non-trivial rational $2$-torsion points of $\Eabcn$ on this
  model are
  \[
    (-bcn,0),\qquad (-acn,0),\qquad (-abn,0);
  \]
  hence
  $\Eabcn(\Q)[2] \cong \Z/2\Z \oplus \Z/2\Z$.
\end{proposition}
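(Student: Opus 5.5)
The plan is to work directly with the cubic in the new coordinates. Multiply \eqref{eq:Eabcn} by $(abc)^2$ and set $X=abc\,x$, $Y=abc\,y$. Each linear factor transforms as
\[
  bc(ax+n)=X+bcn,\qquad ac(bx+n)=X+acn,\qquad ab(cx+n)=X+abn,
\]
and the product of the three prefactors $bc\cdot ac\cdot ab$ equals $(abc)^2$. Hence $Y^2=(X+bcn)(X+acn)(X+abn)$.

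Next I will expand this product using elementary symmetric functions of the roots $-bcn,-acn,-abn$:
\begin{itemize}
  \item the sum of the shifts is $n(ab+ac+bc)$;
  \item the sum of pairwise products is $n^2(abc)(a+b+c)$, since for example $bc\cdot ac=abc\cdot c$;
  \item the product is $n^3(abc)^2$.
\end{itemize}
This gives \eqref{eq:integral-model}, and all its coefficients are integers. Because $abc\ne0$, the substitution is an invertible $\Q$-linear change of coordinates. It therefore gives a $\Q$-isomorphism between the affine curves, and between their projective closures as well.

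For the discriminant, a model $Y^2=f(X)$ with $f$ monic cubic has $\Delta=16\,\mathrm{disc}(f)=16\prod_{i<j}(e_i-e_j)^2$. I will compute the root differences:
\[
  -bcn+acn=cn(a-b),\qquad -bcn+abn=bn(a-c),\qquad -acn+abn=an(b-c).
\]
The product of their squares is $(abc)^2n^6(a-b)^2(a-c)^2(b-c)^2$, which is \eqref{eq:discriminant}.

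Since $n\ne0$, $abc\ne0$ and $a,b,c$ are pairwise distinct, $\Delta\ne0$. So the cubic has distinct roots, the projective closure is smooth, and together with the rational point at infinity it is an elliptic curve over $\Q$. The same holds for the original model \eqref{eq:Eabcn}, because it is $\Q$-isomorphic to this one.

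For the torsion, the $2$-torsion points on a model $Y^2=f(X)$ are $O$ and the points $(e,0)$ with $f(e)=0$. Here the roots $-bcn,-acn,-abn$ are rational and pairwise distinct, by the difference computation above. So $E(\Q)[2]$ has order $4$. Every nonzero element has order $2$, so $E(\Q)[2]$ is not cyclic and is $\Z/2\Z\oplus\Z/2\Z$.

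The only real obstacle is bookkeeping: making sure the scaling by $abc$ is applied to each linear factor, so that the resulting polynomial is monic. Once the factored form is written down, everything else is mechanical.
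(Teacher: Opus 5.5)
Your proposal is correct and takes essentially the same route as the paper. The substitution $X=abc\,x$, $Y=abc\,y$ gives the monic cubic with roots $-bcn,-acn,-abn$, and the pairwise differences $cn(a-b)$, $bn(a-c)$, $an(b-c)$ then give the discriminant, nonsingularity and the full rational $2$-torsion; your added remark that the invertible linear substitution identifies the projective closures is a small piece of rigor the paper leaves implicit.
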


\begin{proof}
  Expanding $(ax+n)(bx+n)(cx+n)$ and substituting $X=abc\,x$,
  $Y=abc\,y$ yields~\eqref{eq:integral-model}. The roots of the cubic
  in $X$ are $-bcn$, $-acn$, $-abn$, with pairwise differences
  $cn(a-b)$, $bn(a-c)$, $an(b-c)$, so the model discriminant is
  \[
    16\{cn(a-b)\}^{2}\{bn(a-c)\}^{2}\{an(b-c)\}^{2},
  \]
  which is~\eqref{eq:discriminant}. Since $nabc\ne 0$ and $a,b,c$ are
  pairwise distinct, this discriminant is nonzero; hence the cubic
  is nonsingular and defines an elliptic curve over $\Q$. The same
  three roots give the stated nontrivial rational $2$-torsion
  points.
\end{proof}

For an odd prime $p$, the displayed model has nonsingular reduction
whenever
\[
  p \nmid abc\,n(a-b)(a-c)(b-c),
\]
and no minimality assertion is made at the excluded primes. The full
rational $2$-torsion used below comes from the split form, not from
the square conditions defining a $D(n)$-triple.

The split-cubic form also gives an explicit Legendre normalization
at every odd prime of good reduction in the sense above. This is
the standard normalization of a split cubic with three rational
roots, together with the usual quadratic-twist ambiguity
\citep[Ch.~III, Sec.~1]{Silverman-AEC}; we recall it only to
identify the parameter and twist factor for the
family~\eqref{eq:Eabcn}. Over a field of odd characteristic, for
\[
  Y^2=(X-r_1)(X-r_2)(X-r_3)
\]
with distinct roots, the substitution
$X=r_1+(r_2-r_1)u$, $Y=(r_2-r_1)Z$ gives
  \[
    Z^2=(r_2-r_1)\,u(u-1)(u-\lambda),
    \qquad
    \lambda=\frac{r_3-r_1}{r_2-r_1}
  \]
Hence the curve is a quadratic twist of the Legendre curve
$L_\lambda:v^2=u(u-1)(u-\lambda)$. For the integral model we take
$r_1=-bcn$, $r_2=-acn$, $r_3=-abn$, so
$r_2-r_1=cn(b-a)$ and $r_3-r_1=bn(c-a)$. Therefore
\[
  \lambda=\frac{r_3-r_1}{r_2-r_1}
  =\frac{b(c-a)}{c(b-a)}
\]
\begin{theorem}[Specialized Legendre--twist normalization]
  \label{thm:legendre-twist}
  Let $n\in \Z\setminus\{0\}$, let $1\le a<b<c$ be integers, and let
  \[
    E=E_{a,b,c}^{(n)}:\quad
    Y^{2}=(X+bcn)(X+acn)(X+abn)
  \]
  be the integral model associated with
  $y^{2}=(ax+n)(bx+n)(cx+n)$. Let $p$ be an odd prime with
  \[
    p\nmid abc\,n(a-b)(a-c)(b-c),
  \]
  (equivalently, the displayed integral model has nonsingular
  reduction at $p$), and let $\overline E_p/\F_p$ be the reduction.
  Let $\chi_p$ denote the quadratic character of $\F_p$, extended
  by $\chi_p(0)=0$. Define
  \[
    \lambda=\frac{b(c-a)}{c(b-a)}\in \F_p\setminus\{0,1\},
    \qquad
    \delta=cn(b-a)\in \F_p^{\times},
  \]
  and let $L_{\lambda}:v^{2}=u(u-1)(u-\lambda)$ be the corresponding
  Legendre curve over $\F_p$. Then the change of variables
  \[
    u=\frac{X+bcn}{cn(b-a)},\qquad
    Z=\frac{Y}{cn(b-a)}
  \]
  identifies $\overline E_p$ over $\F_p$ with
  \[
    C_{\lambda,\delta}:\quad
    Z^{2}=\delta\,u(u-1)(u-\lambda).
  \]
  For an elliptic curve $C/\F_p$, write $a_p(C)=p+1-\#C(\F_p)$. The
  curve $C_{\lambda,\delta}$ is the quadratic twist of $L_\lambda$
  by $\delta$. If $\delta$ is a square, it is $\F_p$-isomorphic to
  $L_\lambda$; if $\delta$ is a nonsquare, it is the twist by the
  nontrivial quadratic character. At exceptional $j$-invariants,
  that twist may still be $\F_p$-isomorphic to $L_\lambda$.
  In either case,
  \[
    a_p(\overline E_p)=\chi_p(\delta)\,a_p(L_{\lambda}),
  \]
  hence
  \[
    \#\overline E_p(\F_p)
    =
    p+1-\chi_p(\delta)\,a_p(L_{\lambda}).
  \]
  In particular, with $a,b,c$ fixed and $p$ as above, the dependence
  on $n$ among values with $p\nmid n$ enters only through
  $\chi_p(n)$.
\end{theorem}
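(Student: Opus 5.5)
The plan is to verify the change of variables directly, then count points with the quadratic character. First I check that the parameters make sense modulo $p$. Since $p\nmid abc\,n(a-b)(a-c)(b-c)$, the element $\delta=cn(b-a)$ is a unit in $\F_p$. The numerator $b(c-a)$ and denominator $c(b-a)$ of $\lambda$ are also units, so $\lambda\neq 0$. Moreover
\[
\lambda-1=\frac{b(c-a)-c(b-a)}{c(b-a)}=\frac{a(c-b)}{c(b-a)}\neq0,
\]
so $\lambda\ne 1$ as well. Next I substitute $X=-bcn+\delta u$ and $Y=\delta Z$ into $Y^2=(X+bcn)(X+acn)(X+abn)$. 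The three factors become
\begin{align*}
X+bcn&=\delta u,\\
X+acn&=\delta u+cn(a-b)=\delta(u-1),\\
X+abn&=\delta u+bn(a-c)=\delta\bigl(u-\lambda\bigr).
\end{align*}
The last line uses $bn(c-a)/\delta=\lambda$, so $bn(a-c)=-\delta\lambda$. Dividing $\delta^2Z^2=\delta^3u(u-1)(u-\lambda)$ by $\delta^2$ gives $C_{\lambda,\delta}$. This change of variables is affine-linear and invertible over $\F_p$ and fixes the point at infinity, so it is an $\F_p$-isomorphism of the smooth projective curves.

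For the twist statements, suppose $\delta=s^2$. Then $(u,Z)\mapsto(u,Z/s)$ maps $C_{\lambda,\delta}$ onto $L_\lambda$. Suppose instead $\delta$ is a nonsquare. Then $C_{\lambda,\delta}$ becomes isomorphic to $L_\lambda$ over $\F_p(\sqrt\delta)$ via $Z\mapsto \sqrt\delta\,Z$, and the associated cocycle is the nontrivial quadratic character. That is the standard quadratic twist, cf.\ \citep[Ch.~X]{Silverman-AEC}. I will only remark, without proof, that at $j=0,1728$ extra automorphisms can make this twist trivial; the trace formula below does not depend on it.

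The key step is the point count, which I will do directly rather than through Galois cohomology. Put $f(u)=u(u-1)(u-\lambda)$. For each $u\in\F_p$, the number of $Z\in\F_p$ with $Z^2=\delta f(u)$ equals $1+\chi_p(\delta f(u))$. Adding the single point at infinity,
\[
\#C_{\lambda,\delta}(\F_p)=p+1+\chi_p(\delta)\sum_{u\in\F_p}\chi_p(f(u)),
\]
by multiplicativity of $\chi_p$. This step holds even when $f(u)=0$, because $\chi_p(0)=0$. Taking $\delta=1$ gives $a_p(L_\lambda)=-\sum_u\chi_p(f(u))$, and therefore $a_p(C_{\lambda,\delta})=\chi_p(\delta)a_p(L_\lambda)$. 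Isomorphic curves have equal point counts, which yields the formulas for $\overline E_p$.

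For the final claim, fix $a,b,c$ and $p$. Then $\lambda$ does not involve $n$ at all, and $\chi_p(\delta)=\chi_p(n)\chi_p(c(b-a))$, so $n$ enters only through $\chi_p(n)$. I expect no real obstacle. The only delicate point is the wording about exceptional $j$-invariants, and I handle it by stating the trace identity directly from the character sum rather than from any classification of twists.
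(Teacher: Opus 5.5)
Your proof is correct and follows the paper's argument essentially step for step: the unit checks giving $\delta\in\F_p^\times$ and $\lambda\ne0,1$, the substitution $X=-bcn+\delta u$, $Y=\delta Z$, and the character-sum count $p+1+\chi_p(\delta)\sum_u\chi_p(u(u-1)(u-\lambda))$, with your explicit square/nonsquare twist maps supplying a detail the paper leaves implicit. One minor correction to your unproved aside, which your argument does not use: at $j=0$ a nonsquare quadratic twist is never $\F_p$-isomorphic to the original curve, because $y^2=x^3+B$ and $y^2=x^3+B\delta^3$ are isomorphic only when $\delta^3$ is a sixth power, i.e.\ only when $\delta$ is a square, so the exceptional case is really $j=1728$ with $p\equiv3\pmod4$.
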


\begin{proof}
  The hypotheses on $p$ make $b-a$, $c-a$, $b-c$, $c$, and $n$
  nonzero in $\F_p$, so $\delta\in\F_p^\times$ and $\lambda\ne 0$.
  If $\lambda=1$, then $b(c-a)=c(b-a)$ in $\F_p$, hence $a(b-c)=0$,
  contradicting the good-reduction hypothesis; thus
  $\lambda\in\F_p\setminus\{0,1\}$.

  Setting $X=-bcn+\delta u$ and $Y=\delta Z$ with $\delta=cn(b-a)$
  gives, over $\F_p$,
  \[
    X+bcn=\delta u,\qquad
    X+acn=\delta(u-1),\qquad
    X+abn=\delta(u-\lambda)
  \]
  The reduced equation becomes
  $\delta^2 Z^{2}=\delta^{3}u(u-1)(u-\lambda)$, and division by
  $\delta^2$ yields
  \[
    Z^{2}=\delta\,u(u-1)(u-\lambda),
  \]
  which is $C_{\lambda,\delta}$. The substitution extends to the
  projective closures by sending the point at infinity of
  $\overline E_p$ to that of $C_{\lambda,\delta}$, giving the
  asserted $\F_p$-isomorphism.

  Counting points on $C_{\lambda,\delta}$ gives
  \[
  \begin{aligned}
    \#\overline E_p(\F_p)
    &=p+1+\sum_{u\in\F_p}
      \chi_p\!\bigl(\delta u(u-1)(u-\lambda)\bigr)\\
    &=p+1+\chi_p(\delta)
      \sum_{u\in\F_p}\chi_p\!\bigl(u(u-1)(u-\lambda)\bigr),
  \end{aligned}
  \]
  since $\delta\in\F_p^\times$. For the Legendre curve,
  \[
    a_p(L_{\lambda})
    =-\sum_{u\in\F_p}\chi_p\!\bigl(u(u-1)(u-\lambda)\bigr),
  \]
  so the trace and point-count formulae follow. Finally,
  $\chi_p(\delta)=\chi_p(c(b-a))\chi_p(n)$, so with $a,b,c,p$ fixed
  the remaining dependence on $n$ among values with $p\nmid n$ is
  through $\chi_p(n)$; this concerns the displayed family only and
  does not assert that $\{a,b,c\}$ remains a $D(n)$-triple as $n$
  varies.
\end{proof}

For what follows, another key consequence of
Proposition~\ref{prop:assoc-curve} is the congruence
\begin{equation}
  \label{eq:four-divides}
  4 \mid \#\overline E_p(\F_{p}) \qquad
  \text{for every odd prime $p$ of good reduction,}
\end{equation}
where $\overline E_p$ denotes the reduction of $\Eabcn$ modulo $p$.
Indeed, when $p$ is odd, the reduction map is injective on
prime-to-$p$ torsion \citep[Prop.~VII.3.1(b)]{Silverman-AEC}, so the
full rational $2$-torsion subgroup
\[
  \Eabcn(\Q)[2] \cong \Z/2\Z \oplus \Z/2\Z
\]
embeds into $\overline E_p(\F_{p})$.

\section{Finite-field arithmetic criteria}
\label{sec:criteria}

For each odd good reduction $\overline E_p/\F_p$ we record the point
count, largest prime divisor $q$, cofactor $h$, embedding degree
$k=\ord_q(p)$, and anomalous status. These are arithmetic diagnostics
only; subgroup structure, CM data, protocol issues, implementation
issues, and standards compliance are outside the scope.

\paragraph{Large prime factor and cofactor.}
Write $\#\overline E_p(\F_p)=hq$ with $q$ the largest prime divisor.
Pollard's rho method~\citep{Pollard-rho} makes the size of $q$
central, while the split form here forces
$4\mid\#\overline E_p(\F_p)$ at every odd good prime. Thus such
reductions are non-anomalous, and for $p\ge 11$ the cofactor relative
to $q$ satisfies $h\ge4$.

\paragraph{Embedding degree.}
If $q=2$, set $k=1$; otherwise let $k=\ord_q(p)$. The MOV and
Frey--R\"uck reductions~\citep{MOV-attack,FrRuck} map into
$\F_{p^k}^{\times}$, so the relevant transfer-field size is measured
by $k\log_2 p$. Since the primes used below satisfy $p\ge1009$, and
if $p\ge 11$ with $p\mid \#\overline E_p(\F_p)$, Hasse's bound gives
\[
  \#\overline E_p(\F_p) \le p+1+2\sqrt p < 2p,
\]
the non-anomalous reductions considered here have $q\ne p$. The Magma
scripts compute this order exactly.

\paragraph{Anomalous reduction.}
A curve is \emph{anomalous} at $p$ if $\#\overline E_p(\F_p)=p$;
Smart and Semaev solve the discrete logarithm on such curves in
polynomial time~\citep{Smart-anomalous,SemaevAnomalous}. Here odd good
reductions are automatically non-anomalous because
$4\mid\#\overline E_p(\F_p)$.

The next proposition is stated for the $D(n)$-triple family, but its
proof uses only the split-cubic model with three distinct roots and
good odd reduction.

\begin{proposition}[Automatic structural properties at odd good primes]
  \label{prop:forced-cofactor}
  Let $n\in\Z\setminus\{0\}$, let $\{a,b,c\}$ be a positive
  integral $D(n)$-triple, and let $p$ be an odd
  prime of good reduction for $\Eabcn$, with reduction
  $\overline E_p/\F_p$. Then
  \begin{enumerate}
    \item $4 \mid \#\overline E_p(\F_{p})$;
    \item $\overline E_p$ is non-anomalous.
  \end{enumerate}
  If moreover $p \ge 11$, then $\#\overline E_p(\F_{p}) > 4$ by
  Hasse's bound, so with $q$ the largest prime divisor of
  $\#\overline E_p(\F_{p})$ and $h=\#\overline E_p(\F_{p})/q$ one has
  $h \ge 4$. The primes $p=3,5,7$ are not excluded from~(i)
  or~(ii); they are excluded only from this cofactor lower bound.
\end{proposition}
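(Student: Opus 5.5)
The plan is to derive all three assertions from the full rational $2$-torsion given by Proposition~\ref{prop:assoc-curve}, together with Hasse's bound. The $D(n)$-condition is never used; only the fact that $a,b,c$ are positive, pairwise distinct and $n\ne0$ matters.

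For (i), take the three nontrivial $2$-torsion points $(-bcn,0)$, $(-acn,0)$, $(-abn,0)$ on the integral model \eqref{eq:integral-model} and reduce them modulo $p$. There are two routes:

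\begin{itemize}
\item Invoke the injectivity of reduction on prime-to-$p$ torsion \citep[Prop.~VII.3.1(b)]{Silverman-AEC}, exactly as in the derivation of \eqref{eq:four-divides}.
\item Argue directly, which avoids any minimality issue at $p$. Good reduction in the sense of the paper means $p\nmid abc\,n(a-b)(a-c)(b-c)$. Then the pairwise differences $cn(a-b)$, $bn(a-c)$, $an(b-c)$ of the roots are nonzero in $\F_p$. So the reduced points are three distinct affine points of order $2$ in $\overline E_p(\F_p)$.
\end{itemize}

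Either way, $\overline E_p(\F_p)$ contains a subgroup isomorphic to $\Z/2\Z\oplus\Z/2\Z$, and Lagrange's theorem gives $4\mid\#\overline E_p(\F_p)$.

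For (ii), suppose $\#\overline E_p(\F_p)=p$. Then (i) gives $4\mid p$, which is impossible for a prime, so the reduction is non-anomalous. This holds for every odd good prime, including $p=3,5,7$.

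For the cofactor bound, assume $p\ge11$. Hasse's bound gives
\[
\#\overline E_p(\F_p)\ge p+1-2\sqrt p=(\sqrt p-1)^2 ,
\]
which exceeds $4$ once $\sqrt p>3$, i.e.\ $p\ge11$. Write $N=\#\overline E_p(\F_p)=4m$ with $m\ge2$, and let $q$ be the largest prime divisor of $N$. Split into two cases:

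\begin{itemize}
\item If $q=2$, then $N$ is a power of $2$ with $N\ge8$. Hence $h=N/2\ge4$.
\item If $q$ is odd, then $q\mid m$, so $4\mid h=N/q$. Since $h\ge1$, this forces $h\ge4$.
\end{itemize}

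I expect no step to be a genuine obstacle. The only point needing care is making sure the reduced $2$-torsion points stay distinct; this is handled by the explicit root differences above, so the argument does not rely on the model being minimal at $p$.
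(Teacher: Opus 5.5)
Your proposal is correct and follows essentially the same route as the paper. Part~(i) comes from the full rational $2$-torsion embedding into $\overline E_p(\F_p)$ via injectivity of reduction on prime-to-$p$ torsion; your direct root-difference argument is a valid elementary alternative under the paper's model-based notion of good reduction. Part~(ii) follows from $4\nmid p$, and your cofactor bound (Hasse's bound plus the split into $q=2$ and $q$ odd) is identical to the paper's proof.
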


\begin{proof}
  Since $p$ is odd, Equation~\eqref{eq:four-divides} gives~(i). If
  $\#\overline E_p(\F_{p}) = p$, then $4 \mid p$, impossible for odd
  $p$, giving~(ii). For $p \ge 11$, Hasse's bound gives
  $\#\overline E_p(\F_{p}) \ge p + 1 - 2\sqrt{p} > 4$. Set
  $N=\#\overline E_p(\F_{p})$ and let $q$ be the largest prime
  divisor of $N$. If $q=2$, then $N$ is a power of $2$, and $N>4$
  forces $N\ge 8$, hence $h=N/2\ge 4$. If $q$ is odd, then
  $4\mid N/q=h$, so again $h\ge 4$.
\end{proof}

Thus, at the odd primes used below, $4$ is the smallest possible
cofactor in this family.

\begin{corollary}[Cofactor-four criterion from the Legendre trace]
  \label{cor:h4-criterion}
  With the notation of Theorem~\ref{thm:legendre-twist} and $p\ge 11$, the
  reduction $\overline E_p/\F_p$ has cofactor
  $h(\overline E_p/\F_p)=4$ (relative to the largest prime divisor of
  $\#\overline E_p(\F_p)$) if and only if
  \[
    \frac{p+1-\chi_p(\delta)\,a_p(L_{\lambda})}{4}
  \]
  is prime ($2$ included). If $\overline E'_p$ denotes the
  quadratic twist of $\overline E_p$ by any nonsquare in
  $\F_p^\times$, then
  $h(\overline E'_p/\F_p)=4$ if and only if
  \[
    \frac{p+1+\chi_p(\delta)\,a_p(L_{\lambda})}{4}
  \]
  is prime ($2$ included).
\end{corollary}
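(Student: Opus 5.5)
The plan is to combine the point-count formula of Theorem~\ref{thm:legendre-twist} with the forced divisibility of Proposition~\ref{prop:forced-cofactor}, reducing the cofactor condition to a primality statement about $N/4$.

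\textbf{Step 1 (cofactor four versus primality of $N/4$).} Write $N=\#\overline E_p(\F_p)$. By Proposition~\ref{prop:forced-cofactor}(i) we have $4\mid N$, and since $p\ge 11$ we have $N>4$ and $h\ge4$.

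Suppose first that $N/4=r$ is prime. If $r$ is odd, then $N=4r$ and the largest prime divisor is $q=r$, so $h=4$. If $r=2$, then $N=8$, so $q=2$ and $h=4$. This is why $2$ must be admitted in the statement.

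Conversely, suppose $h=N/q=4$. Then $N/4=q$ is prime. Note that when $q=2$ this forces $N=8$, which is consistent with the above.

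\textbf{Step 2 (the main curve).} Substitute $N=p+1-\chi_p(\delta)\,a_p(L_\lambda)$ from Theorem~\ref{thm:legendre-twist}. This gives the first equivalence.

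\textbf{Step 3 (the twist).} Let $d$ be a nonsquare in $\F_p^\times$. The twist $\overline E'_p$ is $\F_p$-isomorphic to $C_{\lambda,d\delta}$, and the character-sum computation in the proof of Theorem~\ref{thm:legendre-twist} gives
\[
\#\overline E'_p(\F_p)=p+1-\chi_p(d\delta)\,a_p(L_\lambda)=p+1+\chi_p(\delta)\,a_p(L_\lambda).
\]
The twist is again a split cubic $Z^2=d\delta\,u(u-1)(u-\lambda)$ with three distinct roots $0,1,\lambda$ in $\F_p$. Its three $2$-torsion points are therefore $\F_p$-rational, so $4\mid\#\overline E'_p(\F_p)$. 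Hasse's bound gives $\#\overline E'_p(\F_p)>4$ for $p\ge 11$. The argument of Step 1 then applies verbatim and gives the second equivalence.

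\textbf{Main obstacle.} The only delicate point is the twist. Proposition~\ref{prop:forced-cofactor} is stated for the reduction of a curve over $\Q$, so it does not apply to $\overline E'_p$ directly. Divisibility by $4$ must instead be checked from the explicit split form over $\F_p$, as in Step 3. One must also note that the twist is well defined up to $\F_p$-isomorphism independently of the choice of nonsquare $d$: since $\chi_p(d)=-1$ for every nonsquare $d$, the point count does not depend on that choice, which is all the statement needs.
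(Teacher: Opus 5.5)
Your proposal is correct and follows essentially the same route as the paper. Both substitute the point count from Theorem~\ref{thm:legendre-twist}, use the rational $2$-torsion for $4\mid N$ and Hasse's bound for $N>4$, and read off $h=4$ iff $N/4$ is prime (with $N/4=2$ giving $N=8$). Both also handle the twist through $Z^2=d\delta\,u(u-1)(u-\lambda)$, whose points $(0,0),(1,0),(\lambda,0)$ are visibly rational. The one small difference: the paper gets $4\mid N$ for the main curve directly from the split model rather than citing Proposition~\ref{prop:forced-cofactor}. That proposition is stated for $D(n)$-triples, while the corollary uses the more general notation of Theorem~\ref{thm:legendre-twist}; your Step~3 argument covers the main curve equally well.
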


\begin{proof}
  By Theorem~\ref{thm:legendre-twist},
  $\#\overline E_p(\F_p)=p+1-\chi_p(\delta)\,a_p(L_{\lambda})$. The
  displayed quotient is an integer because the split model has
  three distinct rational points of order $2$. Since $p\ge11$,
  Hasse's bound gives $\#\overline E_p(\F_p)>4$. If the cofactor is
  $4$, then the quotient is its largest prime divisor. Conversely,
  if $\#\overline E_p(\F_p)=4r$ with $r$ prime, then $r$ is the
  largest prime divisor, including when $r=2$, so the cofactor is $4$.

  The quadratic twist by a nonsquare has the opposite Frobenius trace
  and order $p+1+\chi_p(\delta)\,a_p(L_{\lambda})$. Explicitly, for
  any nonsquare $d\in\F_p^\times$, the twist by $d$ of
  $C_{\lambda,\delta}:Z^2=\delta\,u(u-1)(u-\lambda)$ is represented
  by $Z^2=d\delta\,u(u-1)(u-\lambda)$. The points $(0,0)$, $(1,0)$,
  $(\lambda,0)$, $O$ are still $\F_p$-rational, so the same
  $2$-torsion argument applies; the twist order is divisible by $4$
  and greater than $4$ for $p\ge 11$, giving the second condition.
\end{proof}

Proposition~\ref{prop:forced-cofactor} and Corollary~\ref{cor:h4-criterion} set the agenda for the
rest of the paper: at our ten decimal-size odd test primes, we
measure how often the minimal cofactor $h=4$ is attained, with each
such occurrence detected by the primality of the displayed quotient.

\section{Enumeration and algorithm}
\label{sec:algorithm}

The survey enumerates positive integral $D(n)$-triples with entries at
most $H$ by an $O(H^3)$ triple loop and integer square tests in $\Z$,
with $0$ accepted by Definition~\ref{def:dn-triple}.

\begin{center}
\begin{minipage}{0.92\linewidth}
\begin{lstlisting}[language=Pascal]
for a := 1 to H do
  for b := a+1 to H do
    if IsIntegerSquare(a*b + n) then
      for c := b+1 to H do
        if IsIntegerSquare(a*c + n) and IsIntegerSquare(b*c + n) then
          record (a, b, c)
\end{lstlisting}
\end{minipage}
\end{center}
Table~\ref{tab:enum-counts} summarises the triple counts up to $H = 100$
for the six values of $n$ used below; the script records the count,
first triple, and last triple.

\begin{table}[t]
\centering
\caption{\label{tab:enum-counts}
Number of integral $D(n)$-triples $\{a,b,c\}$ with
$1 \le a<b<c \le H = 100$, under the square-zero convention of
Definition~\ref{def:dn-triple}.}
\begin{tabular}{rrll}
\toprule
$n$ & count & first triple & last triple \\
\midrule
$-3$ & $39$ & $\{1,3,4\}$   & $\{19,28,93\}$ \\
$3$  & $21$ & $\{1,6,13\}$  & $\{13,37,94\}$ \\
$5$  & $23$ & $\{1,4,11\}$  & $\{11,29,76\}$ \\
$8$  & $26$ & $\{1,8,17\}$  & $\{17,28,89\}$ \\
$12$ & $30$ & $\{1,4,13\}$  & $\{13,33,88\}$ \\
$20$ & $25$ & $\{1,5,16\}$  & $\{19,20,79\}$ \\
\midrule
\textbf{total} & $164$ & & \\
\bottomrule
\end{tabular}
\end{table}

For the representative-curve computations of
Table~\ref{tab:crypto-signatures}, we use one explicit triple per $n$.
For $n=3,5,8,12,20$ we take the lexicographically first triple in
Table~\ref{tab:enum-counts}. For $n=-3$ we instead use $(1,4,7)$, the
third enumerated $D(-3)$-triple and the first for which $ab+n$,
$ac+n$, $bc+n$ are all positive squares; this avoids making the
representative depend on a zero-square case. These six examples are
representatives only. The exhaustive part is the separate
$164\times10$ scan summarized in Table~\ref{tab:h4-hits}; full $h=4$ rows
are deposited in the reproducibility archive.

For a fixed triple $(a,b,c)$ we build the integral
model~\eqref{eq:integral-model} in Magma V2.29-6~\citep{Magma}, reduce
it at the ten decimal-size test primes
\[
  p_k=\operatorname{NextPrime}(10^k),\qquad 3\le k\le 12,
\]
via \texttt{ChangeRing}, and read the group order from
\texttt{\#Ep}. Exact point counting over finite fields is standard,
with Schoof's algorithm as the classical polynomial-time
reference~\citep{Schoof}, and Magma supplies the implementation used
here. By~Theorem~\ref{thm:legendre-twist}, the same order can equivalently
be written as
\[
  p+1-\chi_p\!\bigl(cn(b-a)\bigr)\,
  a_p\!\bigl(L_{b(c-a)/(c(b-a))}\bigr),
\]
so the tables below may be read as point-count data for these
explicit Legendre twists. The prime ladder is reproducible rather
than intrinsic: it samples one odd prime just above each power of
$10$ from $10^3$ through $10^{12}$, with no tuning to any particular
triple.

In the bounded scan each of $a,b,c,n,a-b,a-c,b-c$ has nonzero
absolute value strictly smaller than $1009$, so no selected prime
divides any of these factors, and hence none divides
$abc\,n(a-b)(a-c)(b-c)$. By~\eqref{eq:discriminant}, all reductions
in Tables~\ref{tab:crypto-signatures} and~\ref{tab:h4-hits} are at odd primes of
good reduction.

The representative computations in Table~\ref{tab:crypto-signatures} are
recorded in the archived computation transcript: for each reduction we factor
$\#\overline E_p(\F_p)$, set $q$ to its largest prime divisor,
define $h=\#\overline E_p(\F_p)/q$, and compute
$k=\ord_q(p)$ using Magma's multiplicative-order routine. The
exhaustive $164\times10$ scan summarized in Table~\ref{tab:h4-hits} is
archived separately with a full row-level data export, in which
the columns \texttt{lambda} and \texttt{delta} give the least
nonnegative integer representatives of the corresponding classes in
$\F_p$, and \texttt{factorization\_N} records the full prime
factorization of $N=\#\overline E_p(\F_p)$.

The scripts expose $H$, the list of $n$, and the prime list for larger
experiments; the present tables are the finite screen described above.

\section{Computational results}
\label{sec:results}

Table~\ref{tab:crypto-signatures} reports finite-field arithmetic data for
each representative reduction: point count, cofactor $h$, largest
prime factor $q$ with bit length, and embedding degree
$k=\ord_q(p)$. For MOV/Frey--R\"uck interpretation we also use
\[
  b_{\mathrm{MOV}}=\lfloor k\log_2 p\rfloor+1,
\]
the bit length of the transfer-field size $p^k$. The reported
$b_{\mathrm{MOV}}$ values were computed from this formula in the
export script with 200-bit real precision; they are used only as
transfer-field-size summaries and not in any proof. The rows are
descriptive data from the deterministic grid of
Section~\ref{sec:algorithm}; they are not sampled from a probability model
and should not be read as estimating an asymptotic density for
$h=4$.

All rows occur at odd primes of good reduction by the criterion
verified in Section~\ref{sec:algorithm}. The main-text tables give the
representative reductions and a summary of all cofactor-four
hits; the archived data contain the full row-level audit
data, including the factorization of each order
$N=\#\overline E_p(\F_p)$. Table~\ref{tab:audit-summary} records row
counts and extremal values.
By~Theorem~\ref{thm:legendre-twist}, each point count is the order of an
explicit quadratic twist of the Legendre curve with parameter
$\lambda=b(c-a)/(c(b-a))$; the representative computation records
$\lambda$, $\delta$, $\chi_p(\delta)$, $a_p(L_\lambda)$ and asserts
the Legendre trace formula for every row in
Table~\ref{tab:crypto-signatures}. Twist data are not included in this
batch table; they are treated separately in the worked example of
Section~\ref{sec:worked}. Every representative reduction is non-anomalous
across all ten primes, as predicted by~Proposition~\ref{prop:forced-cofactor}.

\begin{table}[t]
\centering\small
\setlength{\tabcolsep}{4pt}
\caption{\label{tab:audit-summary}
Audit summary for the finite computational screen.}
\begin{tabular}{lrr}
\toprule
Data set & rows & \(h=4\) rows \\
\midrule
All scanned reductions & \(1640\) & \(45\) \\
Cofactor-four subtable & \(45\) & \(45\) \\
\bottomrule
\end{tabular}

\medskip
\begin{tabular}{lll}
\toprule
Quantity & value & attaining row \((n,a,b,c,p)\) \\
\midrule
Maximum \(q\) & \(249{,}999{,}945{,}919\)
& \((20,4,11,31,1000000000039)\) \\
Maximum \(b_{\mathrm{MOV}}\) & \(9{,}965{,}782{,}128{,}798\)
& \((20,4,11,31,1000000000039)\) \\
\bottomrule
\end{tabular}

\end{table}

\begingroup
\small
\setlength{\tabcolsep}{2.5pt}
\begin{longtable}{rlrrrrrr}
\caption{\label{tab:crypto-signatures}
Finite-field arithmetic data for representative reductions
$\overline E_p$ at ten decimal-size test primes. The ``bits'' column
gives the binary bit-length $\lfloor \log_{2} q \rfloor+1$; the $k$
column gives the exact embedding degree $\ord_q(p)$. The associated
transfer-field bit length is
$b_{\mathrm{MOV}}=\lfloor k\log_2 p\rfloor+1$, discussed below.
Extracted from the archived computation transcript.}\\
\toprule
$n$ & triple & $p$ & $\#\overline E_p(\F_{p})$ & $h$ & $q$ & bits & $k$ \\
\midrule
\endfirsthead
\caption[]{Finite-field arithmetic data for representative
reductions $\overline E_p$ at ten decimal-size test primes
(continued).}\\
\toprule
$n$ & triple & $p$ & $\#\overline E_p(\F_{p})$ & $h$ & $q$ & bits & $k$ \\
\midrule
\endhead
\bottomrule
\endlastfoot
$-3$ & $(1,4,7)$ & $1009$    & $1008$    & $144$  & $7$      & $3$  & $1$   \\
     &           & $10007$   & $9880$    & $520$  & $19$     & $5$  & $18$  \\
     &           & $100003$  & $99856$   & $1264$ & $79$     & $7$  & $78$  \\
     &           & $1000003$ & $1000680$ & $3720$ & $269$    & $9$  & $268$ \\
     &           & $10000019$ & $9997248$ & $192$ & $52069$ & $16$ & $52068$ \\
     &           & $100000007$ & $99987520$ & $13120$ & $7621$ & $13$ & $7620$ \\
     &           & $1000000007$ & $1000035032$ & $12328$ & $81119$ & $17$ & $81118$ \\
     &           & $10000000019$ & $10000116216$ & $72$ & $138890503$ & $28$ & $138890502$ \\
     &           & $100000000003$ & $100000469072$ & $293776$ & $340397$ & $19$ & $24314$ \\
     &           & $1000000000039$ & $1000000748400$ & $1200$ & $833333957$ & $30$ & $833333956$ \\
\midrule
$3$  & $(1,6,13)$ & $1009$    & $1048$    & $8$    & $131$    & $8$  & $26$  \\
     &            & $10007$   & $9900$    & $900$  & $11$     & $4$  & $10$  \\
     &            & $100003$  & $100112$  & $16$   & $6257$   & $13$ & $6256$ \\
     &            & $1000003$ & $1000768$ & $1216$ & $823$    & $10$ & $822$ \\
     &            & $10000019$ & $9998480$ & $80$ & $124981$ & $17$ & $41660$ \\
     &            & $100000007$ & $99994612$ & $12988$ & $7699$ & $13$ & $7698$ \\
     &            & $1000000007$ & $999992320$ & $5120$ & $195311$ & $18$ & $39062$ \\
     &            & $10000000019$ & $10000149816$ & $123384$ & $81049$ & $17$ & $27016$ \\
     &            & $100000000003$ & $99999835152$ & $685104$ & $145963$ & $18$ & $145962$ \\
     &            & $1000000000039$ & $999998709456$ & $912$ & $1096489813$ & $31$ & $365496604$ \\
\midrule
$5$  & $(1,4,11)$ & $1009$    & $1008$    & $144$  & $7$      & $3$  & $1$   \\
     &            & $10007$   & $10104$   & $24$   & $421$    & $9$  & $210$ \\
     &            & $100003$  & $99732$   & $12$   & $8311$   & $14$ & $4155$ \\
     &            & $1000003$ & $1000056$ & $24$   & $41669$  & $16$ & $20834$ \\
     &            & $10000019$ & $9997896$ & $24$ & $416579$ & $19$ & $416578$ \\
     &            & $100000007$ & $99983240$ & $280$ & $357083$ & $19$ & $357082$ \\
     &            & $1000000007$ & $1000054484$ & $33748$ & $29633$ & $15$ & $29632$ \\
     &            & $10000000019$ & $10000074832$ & $56848$ & $175909$ & $18$ & $175908$ \\
     &            & $100000000003$ & $100000028556$ & $36$ & $2777778571$ & $32$ & $1388889285$ \\
     &            & $1000000000039$ & $999998738600$ & $200$ & $4999993693$ & $33$ & $2499996846$ \\
\midrule
$8$  & $(1,8,17)$ & $1009$    & $1016$    & $8$    & $127$    & $7$  & $63$ \\
     &            & $10007$   & $10120$   & $440$  & $23$     & $5$  & $11$  \\
     &            & $100003$  & $100400$  & $400$  & $251$    & $8$  & $125$ \\
     &            & $1000003$ & $998908$  & $4$    & $249727$ & $18$ & $41621$ \\
     &            & $10000019$ & $9999852$ & $13908$ & $719$ & $10$ & $359$ \\
     &            & $100000007$ & $100006880$ & $203680$ & $491$ & $9$ & $1$ \\
     &            & $1000000007$ & $999979976$ & $8$ & $124997497$ & $27$ & $62498748$ \\
     &            & $10000000019$ & $9999809320$ & $40$ & $249995233$ & $28$ & $31249404$ \\
     &            & $100000000003$ & $99999894276$ & $4448196$ & $22481$ & $15$ & $5620$ \\
     &            & $1000000000039$ & $999999512376$ & $19730472$ & $50683$ & $16$ & $25341$ \\
\midrule
$12$ & $(1,4,13)$ & $1009$    & $1016$    & $8$    & $127$    & $7$  & $63$ \\
     &            & $10007$   & $9840$    & $240$  & $41$     & $6$  & $8$   \\
     &            & $100003$  & $99804$   & $12$   & $8317$   & $14$ & $4158$ \\
     &            & $1000003$ & $1000504$ & $8$    & $125063$ & $17$ & $62531$ \\
     &            & $10000019$ & $9996776$ & $6616$ & $1511$ & $11$ & $151$ \\
     &            & $100000007$ & $99998800$ & $4400$ & $22727$ & $15$ & $11363$ \\
     &            & $1000000007$ & $999937272$ & $109992$ & $9091$ & $14$ & $505$ \\
     &            & $10000000019$ & $9999902088$ & $87912$ & $113749$ & $17$ & $37916$ \\
     &            & $100000000003$ & $100000243244$ & $4532$ & $22065367$ & $25$ & $22065366$ \\
     &            & $1000000000039$ & $999998879904$ & $689179104$ & $1451$ & $11$ & $725$ \\
\midrule
$20$ & $(1,5,16)$ & $1009$    & $984$     & $24$   & $41$     & $6$  & $10$  \\
     &            & $10007$   & $9992$    & $8$    & $1249$   & $11$ & $39$  \\
     &            & $100003$  & $100300$  & $1700$ & $59$     & $6$  & $29$  \\
     &            & $1000003$ & $999784$  & $376$  & $2659$   & $12$ & $1329$ \\
     &            & $10000019$ & $10002640$ & $7760$ & $1289$ & $11$ & $184$ \\
     &            & $100000007$ & $99994456$ & $8$ & $12499307$ & $24$ & $6249653$ \\
     &            & $1000000007$ & $1000041192$ & $72$ & $13889461$ & $24$ & $13889460$ \\
     &            & $10000000019$ & $10000137456$ & $2736$ & $3655021$ & $22$ & $3655020$ \\
     &            & $100000000003$ & $100000146188$ & $236$ & $423729433$ & $29$ & $47081048$ \\
     &            & $1000000000039$ & $999998772068$ & $1035724$ & $965507$ & $20$ & $482753$ \\
\end{longtable}
\endgroup

\begin{table}[htbp]
\centering
\small
\setlength{\tabcolsep}{3pt}
\caption{\label{tab:mov-summary}
Summary of transfer-field bit lengths
\(b_{\mathrm{MOV}}=\lfloor k\log_2 p\rfloor+1\) for the two displayed
data sets.}
\begin{tabular}{lrrl}
\toprule
Data set & rows & range of \(b_{\mathrm{MOV}}\)
& max row \((n,a,b,c,p)\) \\
\midrule
Representatives
& $60$ & $10$--$99{,}657{,}717{,}119$
& $(5,1,4,11,1000000000039)$ \\
\(h=4\) rows
& $45$ & $250$--$9{,}965{,}782{,}128{,}798$
& $(20,4,11,31,1000000000039)$ \\
\bottomrule
\end{tabular}
\end{table}

Two observations deserve emphasis. First, none of the six
representative triples attains $h = 4$ at $p=1009$ or $p=10007$, but
$(1,8,17)$ with $n=8$ does attain $h=4$ at $p = 1000003$, producing
an $18$-bit prime $q$; this is the worked example of
Section~\ref{sec:worked}. Second, the embedding degrees vary widely, from
$k=1$ and $k=8$ up to $k=2{,}499{,}996{,}846$, with corresponding
transfer-field bit lengths summarised in Table~\ref{tab:mov-summary}.
These are exact finite-field invariants of the sampled reductions,
not deployment-level security estimates: the prime subgroup sizes
in the table are far below modern ECC sizes.

A full scan of the $164$ triples in Table~\ref{tab:enum-counts} over the
ten test primes gives forty-five $h=4$ cases out of $1640$ rows. By
Corollary~\ref{cor:h4-criterion}, these are exactly the rows for which
\[
  \frac{p+1-\chi_p(\delta)\,a_p(L_{\lambda})}{4}
\]
is prime. Table~\ref{tab:h4-hits} summarizes the hits; the full row list,
including \(N\), factorization, \(\lambda\), \(\delta\),
\(\chi_p(\delta)\), \(a_p(L_\lambda)\), \(q\), \(h\), \(k\), and
\(b_{\mathrm{MOV}}\), is deposited in the reproducibility archive.

\begin{table}[t]
\centering\small
\caption{\label{tab:h4-hits}
Summary of all cofactor-four rows in the bounded scan, with
\(p_k=\operatorname{NextPrime}(10^k)\).}
\begin{tabular}{lrrrrrr}
\toprule
\(n\) & \(-3\) & \(3\) & \(5\) & \(8\) & \(12\) & \(20\) \\
\midrule
\(\#\{h=4\}\) & \(10\) & \(2\) & \(6\) & \(9\) & \(9\) & \(9\) \\
\bottomrule
\end{tabular}

\medskip
\begin{tabular}{lrrrrr}
\toprule
\(p_k\) & \(p_3\) & \(p_4\) & \(p_5\) & \(p_6\) & \(p_7\) \\
\midrule
\(\#\{h=4\}\) & \(18\) & \(8\) & \(7\) & \(3\) & \(4\) \\
\bottomrule
\end{tabular}

\medskip
\begin{tabular}{lrrrrr}
\toprule
\(p_k\) & \(p_8\) & \(p_9\) & \(p_{10}\) & \(p_{11}\) & \(p_{12}\) \\
\midrule
\(\#\{h=4\}\) & \(2\) & \(0\) & \(1\) & \(0\) & \(2\) \\
\bottomrule
\end{tabular}

\medskip
\begin{tabular}{lll}
\toprule
Extremal row & value & \((n,a,b,c,p)\) \\
\midrule
Largest \(q\) & \(249{,}999{,}945{,}919\)
& \((20,4,11,31,1000000000039)\) \\
Next largest \(q\) & \(249{,}999{,}898{,}297\)
& \((5,4,29,55,1000000000039)\) \\
\bottomrule
\end{tabular}
\end{table}

\section{Worked example:
  \texorpdfstring{$\{1,8,17\}$ with $n=8$}{\{1,8,17\} with n=8}}
\label{sec:worked}

We examine the only representative in
Table~\ref{tab:crypto-signatures} whose associated curve attains $h=4$ at
one of our ten test primes: the $D(8)$-triple $\{1,8,17\}$. The
example shows how the Legendre--twist formula certifies a
cofactor-four row. The arithmetic verification is immediate:
$1\cdot 8+8 = 16 = 4^{2}$,
$1\cdot 17+8 = 25 = 5^{2}$, $8\cdot 17+8 = 144 = 12^{2}$. Expanding
$(x+8)(8x+8)(17x+8)$ gives
\[
  f(x) = 136\,x^{3} + 1288\,x^{2} + 1664\,x + 512,
\]
and the substitution $X=abc\,x=136 x$, $Y=abc\,y=136 y$ produces
the integral Weierstrass model of Proposition~\ref{prop:assoc-curve},
\begin{equation}
  \label{eq:E1817}
  E_{1,8,17}^{(8)}\;:\; Y^{2} = X^{3} + 1288\,X^{2}
                       + 226{,}304\,X + 9{,}469{,}952
\end{equation}

The model discriminant of~\eqref{eq:E1817} is
\[
  \Delta_{\mathrm{mod}}=78\,824\,057\,314\,738\,176
  =2^{36}\cdot 3^{4}\cdot 7^{2}\cdot 17^{2},
\]
None of the ten test primes divides this discriminant, so the
displayed model has good reduction at all ten primes. By
Proposition~\ref{prop:assoc-curve}, the three nontrivial rational $2$-torsion
points are
\[
  \begin{aligned}
    P_{a}&=(-bcn,0)=(-1088,0),\\
    P_{b}&=(-acn,0)=(-136,0),\\
    P_{c}&=(-abn,0)=(-64,0)
  \end{aligned}
\]
so $E_{1,8,17}^{(8)}(\Q)[2]\cong \Z/2\Z\oplus \Z/2\Z$.

For this example, Theorem~\ref{thm:legendre-twist} gives
\[
  \lambda=\frac{8(17-1)}{17(8-1)}=\frac{128}{119},
  \qquad
  \delta=17\cdot 8\cdot (8-1)=952
\]
Hence, with $\lambda,\delta$ reduced modulo $p$ and $\overline E_p$
the reduction of~\eqref{eq:E1817} at any test prime,
\[
  \#\overline E_p(\F_p)=p+1-\chi_p(952)\,a_p(L_{128/119}),
\]
and the quadratic twist by a nonsquare has order
$p+1+\chi_p(952)\,a_p(L_{128/119})$. Equivalently, the Frobenius
trace $t=p+1-\#\overline E_p(\F_p)$ equals
$\chi_p(952)\,a_p(L_{128/119})$.

Tables~\ref{tab:worked-reductions} and~\ref{tab:worked-twists} report the main
reductions and their nonsquare-twist orders, respectively.
Table~\ref{tab:worked-reductions} lists $\chi_p(952)$, $a_p(L_{128/119})$,
the trace $t=\chi_p(952)\,a_p(L_{128/119})$, and
$\#\overline E_p(\F_p)$. In the Magma script, $\overline E'_p$ is
constructed as $E^{(d_p)}$ for an explicit nonsquare
$d_p\in\F_p^\times$, and the checks
\[
  t=\chi_p(952)\,a_p(L_{128/119}),
  \qquad
  \#\overline E_p(\F_p)+\#\overline E'_p(\F_p)=2p+2
\]
are asserted. As an independent small-prime check, the script also
evaluates the character sum
\[
  p+1+\sum_{u\in\F_p}
  \chi_p\!\bigl(952\,u(u-1)(u-128/119)\bigr)
\]
for $p=1009,10007,100003$ and compares it with
$\#\overline E_p(\F_p)$. We do not identify $\overline E'_p$ with
$E^{(-1)}$; for example, $-1$ is a square in $\F_{1009}$.

\begin{table}[t]
\centering\small
\setlength{\tabcolsep}{3pt}
\caption{\label{tab:worked-reductions}
Main-reduction data for $E_{1,8,17}^{(8)}$ at the ten decimal-size
test primes. Here $L=L_{128/119}$ and
$t=\chi_p(952)a_p(L)$. Boldface flags the cofactor-four row.}
\begin{tabular}{rrrrrrrr}
\toprule
$p$ & $\chi_p(952)$ & $a_p(L)$ & $t$
& $\#\overline E_p(\F_p)$ & $h$ & $q$ & $k$ \\
\midrule
$1009$    & $-1$ & $6$     & $-6$   & $1016$    & $8$          & $127$       & $63$ \\
$10007$   & $1$  & $-112$  & $-112$ & $10120$   & $440$        & $23$        & $11$ \\
$100003$  & $1$  & $-396$  & $-396$ & $100400$  & $400$        & $251$       & $125$ \\
$1000003$ & $-1$ & $-1096$ & $1096$ & $998908$  & $\mathbf{4}$ & $249727$    & $41621$ \\
$10000019$ & $-1$ & $-168$ & $168$ & $9999852$ & $13908$ & $719$ & $359$ \\
$100000007$ & $1$ & $-6872$ & $-6872$ & $100006880$ & $203680$ & $491$ & $1$ \\
$1000000007$ & $-1$ & $-20032$ & $20032$ & $999979976$ & $8$ & $124997497$ & $62498748$ \\
$10000000019$ & $1$ & $190700$ & $190700$ & $9999809320$ & $40$ & $249995233$ & $31249404$ \\
$100000000003$ & $-1$ & $-105728$ & $105728$ & $99999894276$ & $4448196$ & $22481$ & $5620$ \\
$1000000000039$ & $1$ & $487664$ & $487664$ & $999999512376$ & $19730472$ & $50683$ & $25341$ \\
\bottomrule
\end{tabular}
\end{table}

\begin{table}[t]
\centering\small
\setlength{\tabcolsep}{5pt}
\caption{\label{tab:worked-twists}
Orders of the quadratic twists by nonsquares for the same worked example.
Boldface flags the twist cofactor-four row.}
\begin{tabular}{rrr}
\toprule
$p$ & $\#\overline E'_p(\F_p)$ & $h'$ \\
\midrule
$1009$    & $1004$    & $\mathbf{4}$ \\
$10007$   & $9896$    & $8$ \\
$100003$  & $99608$   & $8$ \\
$1000003$ & $1001100$ & $14100$ \\
$10000019$ & $10000188$ & $396$ \\
$100000007$ & $99993136$ & $16$ \\
$1000000007$ & $1000020040$ & $1720$ \\
$10000000019$ & $10000190720$ & $16640$ \\
$100000000003$ & $100000105732$ & $482908$ \\
$1000000000039$ & $1000000487704$ & $8103272$ \\
\bottomrule
\end{tabular}
\end{table}

The ten trace rows make the relation
$t=\chi_p(952)\,a_p(L_{128/119})$ explicit; for instance, at
$p=1000003$ one has $\chi_p(952)=-1$ and $a_p(L_{128/119})=-1096$,
giving $t=1096$.

The example illustrates how the structural criterion detects an
$h=4$ row. At $p=1000003$ the trace is $t=1096$, so
\[
  \#\overline E_p(\F_p)=p+1-t=998{,}908=4\cdot 249{,}727
\]
Equivalently, $(p+1-t)/4=249{,}727$ is prime, matching
Corollary~\ref{cor:h4-criterion}. The cofactor is $h=4$, the largest prime
factor $q=249{,}727$ is an $18$-bit prime, the embedding degree is
$k=41621$, and the reduction is non-anomalous
by~Proposition~\ref{prop:forced-cofactor}. At the smaller prime $p=1009$ the
same formula explains the twist hit: $t=-6$, so the nonsquare
quadratic twist has
\[
  \#\overline E'_p(\F_p)=p+1+t=1004=4\cdot 251,
\]
hence $h'=4$ with twist prime $q'=251$.

These checks are deliberately narrow. The $18$-bit value of $q$ at
$p=1000003$ is far below any deployable security level: modern
prime-order or large-prime-subgroup ECC deployments use subgroup
sizes on the order of hundreds of bits. The example is also not
twist-secure at $p=1000003$, since the quadratic twist by a nonsquare
has order
\[
  \#\overline E'_p(\F_{p})=1{,}001{,}100=14{,}100\cdot 71,
\]
so its largest prime factor is only $71$. The row is positive only
for the specific arithmetic filters verified here: minimal cofactor
on the main reduction, prime largest factor, exact embedding degree,
and absence of anomalous reduction. It is not a cryptographic
suitability claim, and it illustrates why cofactor, embedding-degree,
and anomalous-reduction checks are necessary but far from
sufficient.

The ancillary scripts allow larger primes and height bounds to be
tested in the same pipeline. The frequency of cofactor-four reductions
as these bounds grow is a separate question, formulated in
Question~\ref{q:asymptotic-h4}. The existence of substantially larger prime
subgroups would require its own analysis.

\section{Discussion and open questions}
\label{sec:discussion}

The screen above is finite: $H=100$, ten deterministic test primes,
and subgroup sizes far below deployment-scale ECC. Within that
screen the structural lower bound $h\ge 4$ is sharp: forty-five of
the $1640$ deterministic (triple, prime) pairs have $h=4$. To ask for
an asymptotic analogue one must first specify the sampling model. For
fixed $n\in\Z\setminus\{0\}$, let $\mathcal S_n(H,X)$ be the set of quadruples $(a,b,c,p)$
with $1\le a<b<c\le H$, $p\le X$ an odd prime,
$p\nmid\Delta_{\mathrm{mod}}(\Eabcn)$, and $\{a,b,c\}$ an integral
$D(n)$-triple.
When $\mathcal S_n(H,X)\ne\varnothing$, define
\[
R_n(H,X)=
\frac{
\#\{(a,b,c,p)\in\mathcal S_n(H,X):h(\overline E_p/\F_p)=4\}
}{
\#\mathcal S_n(H,X)
},
\]
where $\overline E_p$ denotes the reduction of $E_{a,b,c}^{(n)}$
modulo $p$ and the cofactor is taken relative to the largest prime
divisor of its group order.

For fixed $H$ containing at least one such triple, the limit in the
prime bound is already determined:
\begin{equation}
  \label{eq:fixed-H-limit}
  \lim_{X\to\infty}R_n(H,X)=0.
\end{equation}
This is a standard consequence of finite congruence sieves for a fixed
elliptic curve, rather than an asymptotic suggested by our data.
For each fixed curve $E/\Q$, the good primes for which
$\#\overline E_p(\F_p)/4$ is an integer and prime have density zero
relative to the primes.\footnote{For a fixed sieve bound $y$, all but
finitely many such primes pass the tests excluding every prime divisor
$\ell\le y$ from $\#\overline E_p(\F_p)/4$; the exceptions are finite
by Hasse's bound. Chebotarev gives the density of this finite set of
congruence tests. For a non-CM curve, these densities tend to zero by
\citep[Sec.~2.1 and Prop.~2.4]{Zywina2011Koblitz}. For a CM curve,
apply \citep[Prop.~2.8]{Zywina2011Koblitz} over its CM field to obtain
the same conclusion for split rational primes. For inert primes,
$\#\overline E_p(\F_p)=p+1$
\citep[Sec.~2.3.1]{Zywina2011Koblitz}; the primes for which
$(p+1)/4$ is prime must, apart from finitely many exceptions, avoid
$p\equiv-1\pmod\ell$ for every odd $\ell\le y$. The density of the inert target set is bounded above by the
density of all rational primes passing these tests, namely
$\prod_{\substack{3\le\ell\le y\\\ell\ \mathrm{prime}}}(\ell-2)/(\ell-1)$, which tends to zero.
In each case take $X\to\infty$ at fixed $y$ first, then
$y\to\infty$. No Koblitz asymptotic or uniform error term is used.}
If $T_n(H)>0$ is the number of triples with $1\le a<b<c\le H$,
the denominator defining $R_n(H,X)$ is
$T_n(H)\pi(X)+O_{n,H}(1)$, while its numerator is $o_{n,H}(\pi(X))$.
Here we use the cofactor-four criterion for $p\ge11$; the finitely
many smaller or model-bad primes do not affect the limit.
Consequently, for any fixed $n$ admitting at least one positive
integral $D(n)$-triple,
\[
  \lim_{H\to\infty}\left(\lim_{X\to\infty}R_n(H,X)\right)=0.
\]
This argument provides no estimate uniform in $H$. Reversing the
limits or allowing $H$ to grow with $X$ therefore requires a separate
analysis.

\begin{question}
  \label{q:asymptotic-h4}
  Fix $n\in\Z\setminus\{0\}$ admitting a positive integral
  $D(n)$-triple. For every sufficiently large fixed $X$, does
  \[
    \lim_{H\to\infty}R_n(H,X)
  \]
  exist, and, if so, what is its limit as $X\to\infty$?
  For a prescribed growth function $H=H(X)\to\infty$, does
  $R_n(H(X),X)$ converge, and how does its behavior depend on $n$
  and on that growth function?
\end{question}

Embedding degrees in Tables~\ref{tab:crypto-signatures} and~\ref{tab:h4-hits} already
vary substantially in this small grid. Larger searches should track
both $\ord_q(p)$ and the transfer-field size $p^k$ relative to the
largest prime divisor $q$.

The forced rational $2$-torsion rules out anomalous odd good
reductions but also forces divisibility of the group order. Passing to
rational $2$-isogenous quotients over the same finite field does not
change the cofactor statistics: reduction gives an $\F_p$-rational
$2$-isogeny, and isogenous elliptic curves over a finite field have
the same Frobenius trace
\citep[Ex.~5.4(a)]{Silverman-AEC}. Thus new cofactor
behaviour must come from changing the finite-field isogeny class,
for example by varying $n$, $\{a,b,c\}$, the prime set, or the search
bounds.

\paragraph{Reproducibility.}
All numerical tables are produced with Magma V2.29-6~\citep{Magma}.
The Magma sources, row-level data and regeneration instructions are
included with this preprint as ancillary files, with a
\href{https://github.com/chatchawanpan-dev/dn-ecc-crypto-suitability-computation}{GitHub archive}
of the original computation. Separate Python verification scripts
reproduce the triple census and arithmetic diagnostics. They count
the original split cubics and Legendre curves directly at the first
four test primes, and verify the remaining group orders by exact
point-order certificates and the Hasse interval. An independent
Jacobian-arithmetic verifier checks those certificates. The
archived computations assert the Legendre trace formula in the
representative computations, the worked example, and all $1640$ rows
behind Table~\ref{tab:h4-hits}; as an independent check, they evaluate the
character sum directly for $p=1009,10007,100003$.



\begin{thebibliography}{99}

\bibitem{BinduSomanathDas2024}
Bindu V. A., M. Somanath, and R. Das, ``Elliptic Curve Cryptography
using a Diophantine Triple, Constructed through Tetradecagonal
Numbers,'' \emph{International Journal of Intelligent Systems and
Applications in Engineering}, vol. 12, no. 22s, pp. 1524--1529, 2024.
Available: \url{https://ijisae.org/index.php/IJISAE/article/view/6695}.
Accessed: 2026-05-01.

\bibitem{Magma}
Computational Algebra Group, University of Sydney, \emph{Magma
Computational Algebra System}, version V2.29-6. Available:
\url{https://magma.maths.usyd.edu.au/magma/}; release notes:
\url{https://magma.maths.usyd.edu.au/magma/releasenotes/pdf/relv229.pdf}.
Accessed: 2026-05-01.

\bibitem{Dujella2001MTuplesEC}
A. Dujella, ``Diophantine $m$-tuples and elliptic curves,''
\emph{Journal de Th{\'e}orie des Nombres de Bordeaux}, vol. 13,
no. 1, pp. 111--124, 2001.
doi: \href{https://doi.org/10.5802/jtnb.308}{10.5802/jtnb.308}.

\bibitem{Dujella-survey}
A. Dujella, ``There are only finitely many Diophantine quintuples,''
\emph{Journal f\"ur die reine und angewandte Mathematik}, vol. 2004,
no. 566, pp. 183--214, 2004.
doi: \href{https://doi.org/10.1515/crll.2004.003}
{10.1515/crll.2004.003}.

\bibitem{DujellaFuchs}
A. Dujella and C. Fuchs, ``Complete solution of a problem of
Diophantus and Euler,'' \emph{Journal of the London Mathematical
Society}, vol. 71, no. 1, pp. 33--52, 2005.
doi: \href{https://doi.org/10.1112/S002461070400609X}
{10.1112/S002461070400609X}.

\bibitem{Dujella-open}
A. Dujella, ``Diophantine $m$-tuples.'' Available:
\url{https://web.math.pmf.unizg.hr/~duje/dtuples.html}.
Accessed: 2026-05-01.

\bibitem{FrRuck}
G. Frey and H.-G. R\"uck, ``A remark concerning $m$-divisibility and
the discrete logarithm in the divisor class group of curves,''
\emph{Mathematics of Computation}, vol. 62, no. 206, pp. 865--874,
1994. doi: \href{https://doi.org/10.2307/2153546}{10.2307/2153546}.

\bibitem{MOV-attack}
A. Menezes, T. Okamoto, and S. Vanstone, ``Reducing elliptic curve
logarithms to logarithms in a finite field,'' \emph{IEEE Transactions
on Information Theory}, vol. 39, no. 5, pp. 1639--1646, 1993.
doi: \href{https://doi.org/10.1109/18.259647}{10.1109/18.259647}.

\bibitem{Pollard-rho}
J. M. Pollard, ``Monte Carlo methods for index computation (mod
$p$),'' \emph{Mathematics of Computation}, vol. 32, no. 143,
pp. 918--924, 1978.
doi: \href{https://doi.org/10.1090/S0025-5718-1978-0491431-9}
{10.1090/S0025-5718-1978-0491431-9}.

\bibitem{Qiu2025Twists}
D. Qiu, ``On twists of a family of elliptic curves and their
$L$-function,'' \emph{Journal of Comprehensive Pure and Applied
Mathematics}, vol. 3, no. 2, pp. 1--32, 2025.
arXiv: \href{https://arxiv.org/abs/1511.07581}{1511.07581}.

\bibitem{Schoof}
R. Schoof, ``Elliptic curves over finite fields and the computation
of square roots mod $p$,'' \emph{Mathematics of Computation},
vol. 44, no. 170, pp. 483--494, 1985.
doi: \href{https://doi.org/10.1090/S0025-5718-1985-0777280-6}
{10.1090/S0025-5718-1985-0777280-6}.

\bibitem{SemaevAnomalous}
I. A. Semaev, ``Evaluation of discrete logarithms in a group of
$p$-torsion points of an elliptic curve in characteristic $p$,''
\emph{Mathematics of Computation}, vol. 67, no. 221, pp. 353--356,
1998. doi: \href{https://doi.org/10.1090/S0025-5718-98-00887-4}
{10.1090/S0025-5718-98-00887-4}.

\bibitem{Silverman-AEC}
J. H. Silverman, \emph{The Arithmetic of Elliptic Curves}, 2nd ed.,
Graduate Texts in Mathematics, vol. 106. Springer, 2009.
doi: \href{https://doi.org/10.1007/978-0-387-09494-6}
{10.1007/978-0-387-09494-6}.

\bibitem{Smart-anomalous}
N. P. Smart, ``The discrete logarithm problem on elliptic curves of
trace one,'' \emph{Journal of Cryptology}, vol. 12, no. 3,
pp. 193--196, 1999. doi:
\href{https://doi.org/10.1007/s001459900052}{10.1007/s001459900052}.

\bibitem{AuerTop2002Legendre}
R. Auer and J. Top, \lq\lq Legendre elliptic curves over finite fields,''
\emph{Journal of Number Theory}, vol. 95, no. 2, pp. 303--312, 2002.
doi: \href{https://doi.org/10.1006/jnth.2001.2760}
{10.1006/jnth.2001.2760}.

\bibitem{DujellaMikic2014D4}
A. Dujella and M. Miki\'c, \lq\lq On the torsion group of elliptic curves
induced by $D(4)$-triples,''
\emph{Analele \c{S}tiin\c{t}ifice ale Universit\u{a}\c{t}ii
\lq\lq Ovidius'' Constan\c{t}a. Seria Matematic\u{a}},
vol. 22, no. 2, pp. 79--90, 2014.
doi: \href{https://doi.org/10.2478/auom-2014-0033}
{10.2478/auom-2014-0033}.

\bibitem{Zywina2011Koblitz}
D. Zywina, \lq\lq A refinement of Koblitz's conjecture,''
\emph{International Journal of Number Theory},
vol. 7, no. 3, pp. 739--769, 2011.
doi: \href{https://doi.org/10.1142/S1793042111004411}
{10.1142/S1793042111004411}.
arXiv: \href{https://arxiv.org/abs/0909.5280}{0909.5280}.

\end{thebibliography}
\end{document}